\renewcommand{\hat}{\widehat}
\theoremstyle{plain}
\newtheorem{Thm}{Theorem}[section]
\newtheorem{Lem}[Thm]{Lemma}
\theoremstyle{definition}
\numberwithin{equation}{section}
\renewcommand\section{\@startsection {section}{1}{\z@}%
                                   {-3.5ex \@plus -1ex \@minus -.2ex}%
                                   {2.3ex \@plus.2ex}%
                                   {\normalfont\large\bf}}
\renewcommand\subsection{\@startsection {subsection}{1}{\z@}%
                                   {-3.5ex \@plus -1ex \@minus -.2ex}%
                                   {2.3ex \@plus.2ex}%
                                   {\normalfont\normalsize\bf}}
\begin{document}

\begin{center}
{\Large \bf 
Limit theorems for the fluctuation of the dynamic elephant random walk in the superdiffusive case
}
\end{center}
\begin{center}
Go Tokumitsu$^{1}$\quad and\quad Kouji Yano$^{2}$

Department of Mathematics, Graduate School of Science, Osaka University, Toyonaka, Osaka, 560-0043, Japan$^{1,2}$

(Electronic mail: go.tokumitsu@gmail.com$^{1}$, kyanomath@gmail.com$^{2}$)
\end{center}

\begin{abstract}
Motivated by the previous results by Coletti--de Lima--Gava--Luiz (2020) and Shiozawa (2022), we study the fluctuation of the dynamic elephant random walk in the superdiffusive case with a strong elephant component. Applying the martingale convergence theorem, we prove the Central Limit Theorem and the Law of Iterated Logarithm, where a random drift is subtracted from the process considered. 
\end{abstract}
\section{Introduction}
The Elephant Random Walk (ERW), which was introduced by Sch\"utz and Trimper \cite{Sch} in 2004, is a one-dimensional discrete-time stochastic process with infinite memory, and is a simple model exhibiting anomalous diffusion. The Dynamic Random Walk (DRW), which was introduced by Guillotin-Plantard \cite{Gui} in 2000, is a one-dimensional discrete-time random walk whose transition probabilities are determined by the orbit of a discrete-time dynamical system; see \cite{Gui2} for the details. Coletti--de Lima--Gava--Luiz \cite{Col} in 2020 introduced the Dynamic Elephant Random Walk (DERW) which is a random mix of the ERW and the DRW. They showed that the DERW exhibits two distinct phase transitions.

The Central Limit Theorem (CLT) and the Law of the Iterated Logarithm (LIL) for the DERW have been studied by Kubota--Takei \cite{KaT}, Coletti--de Lima--Gava--Luiz \cite{Col} and Shiozawa \cite{Sio}. In Coletti et al. \cite{Col}, the CLT and the LIL are proved for the DERW in the superdiffusive case with a weak elephant component. In Kubota--Takei \cite{KaT} and Shiozawa \cite{Sio}, under the assumption of coefficient convergence, they prove the CLT and the LIL in the superdiffusive case with a strong elephant component, where a random drift is subtracted from the process considered. In this paper, we prove the CLT and the LIL for the DERW in the superdiffusive case with a strong elephant component, without assuming the  coefficient convergence.

\subsection{The definition of the dynamic elephant random walk}
Let us introduce the DERW in an equivalent formulation. Let $p,q\in[0,1]$ be constants and let $\alpha=\{\alpha_n\}_{n=1}^\infty$ and $\beta=\{\beta_n\}_{n=1}^\infty$ be sequences of $[0,1]$. The DERW is a stochastic process $\{S_n\}_{n=0}^\infty$ on a probability space $(\Omega,\mathcal{F},P)$ which takes values in $\mathbb{Z}$ such that $S_0:=0$ and the increment $X_n:=S_n-S_{n-1}$ for $n\ge1$ takes values in $\{-1,+1\}$ and satisfies
\begin{align}\label{CE1}
E[X_1]=\alpha_1(2q-1)+(1-\alpha_1)(2\beta_1-1)
\end{align}
and
\begin{align}\label{CE}
E[X_{n+1}|\mathcal{F}_n^X]=\frac{\alpha_{n+1}(2p-1)}{n}\cdot S_n+(1-\alpha_{n+1})(2\beta_{n+1}-1)\quad (n\ge1).
\end{align}
Here $\mathcal{F}_n^X:=\sigma(X_1,\ldots,X_n)$ denotes the natural filtration generated by $\{X_n\}_{n=1}^\infty$. The recursive equations \eqref{CE1} and \eqref{CE} determine the joint distribution of $\{S_n\}_{n=0}^\infty$. In fact, since $S_n=\sum_{k=1}^nX_k$, the joint distribution of $\{S_n\}_{n=0}^\infty$ is determined by that of $\{X_n\}_{n=1}^\infty$, whose finite dimensional distributions are determined by
\begin{align*}
P(X_1=\pm1)=\frac{1\pm E[X_1]}{2}
\end{align*}
and
\begin{align*}
P(X_{n+1}=\pm1|\mathcal{F}_{n}^X)=\frac{1\pm E[X_{n+1}|\mathcal{F}_n^X]}{2}\quad(n\ge1).
\end{align*}
The DERW can be concretely constructed in the following way. Let $X_1$, $D_1,D_2,\ldots$, $C_1,C_2,\ldots$, $U_1,U_2,\ldots$, $\delta_1,\delta_2,\ldots$ be independent random variables whose distributions are given by
\begin{align*}
P(X_1=\pm1)&=\frac{1}{2}\pm\frac{1}{2}(\alpha_1(2q-1)+(1-\alpha_1)(2\beta_1-1))\quad(n\ge0),\\ U_{n+1} \ &\text{is uniform on $\{1,\ldots,n+1\}$},\\
D_{n+1}&:=\begin{cases}
1&\text{with probability $\beta_{n+1}$}\\
-1&\text{with probability $1-\beta_{n+1}$}
\end{cases},\\
C_{n+1}&:=\begin{cases}
1&\text{with probability $p$}\\
-1&\text{with probability $1-p$}
\end{cases},\\
\delta_{n+1}&:=\begin{cases}
1&\text{with probability $\alpha_{n+1}$}\\
0&\text{with probability $1-\alpha_{n+1}$}
\end{cases}.
\end{align*}
We now define
\begin{align*}
X_{n+1}:=\begin{cases}
C_{n+1}X_{U_{n+1}}&(\delta_{n+1}=1)\\
D_{n+1}&(\delta_{n+1}=0)
\end{cases}.
\end{align*}
Set $\mathcal{F}_n:=\sigma(X_1,D_j,C_j,U_j,\delta_j;j\le n)$. Then we obtain, for $n\ge0$,
\begin{align*}
E[X_{n+1}|\mathcal{F}_n]&=\alpha_{n+1}E[C_{n+1}X_{U_{n+1}}|\mathcal{F}_n]+(1-\alpha_{n+1})E[D_{n+1}|\mathcal{F}_n]\\
&=\alpha_{n+1}\cdot\frac{1}{n}\sum_{k=1}^nX_k\cdot E[C_{n+1}]+(1-\alpha_{n+1})(2\beta_{n+1}-1)\\
&=\frac{\alpha_{n+1}(2p-1)}{n}\cdot S_n+(1-\alpha_{n+1})(2\beta_{n+1}-1),
\end{align*}
which shows  \eqref{CE} since $\mathcal{F}_n^{X}\subset \mathcal{F}_n$.

Let us check that our formulation is equivalent to that of the dynamic elephant random walk (DERW) introduced in Coletti et al. \cite{Col}. Let $(\mathcal{X},\mathscr{A},\mu,T)$ be a measure-preserving dynamical system, that is, the triplet $(\mathcal{X},\mathscr{A},\mu)$ is a probability space and $T:\mathcal{X}\to\mathcal{X}$ is a $\mu$-invariant transformation. Let $p,q\in[0,1]$ be constants and let $f:\mathcal{X}\to[0,1]$ be a measurable mapping. If we take $\beta_n=f(T^nx)$, then the recursive equations \eqref{CE1} and \eqref{CE} become
\begin{align*}
E[X_1]=\alpha_1(2q-1)+(1-\alpha_1)(2f(Tx)-1)
\end{align*}
and
\begin{align*}
E[X_{n+1}|\mathcal{F}_n^X]=\frac{\alpha_{n+1}(2p-1)}{n}\cdot S_n+(1-\alpha_{n+1})(2f(T^{n+1}x)-1)\quad(n\ge1),
\end{align*} 
which shows that our formulation reduces to that of Coletti et al. \cite{Col}. Next, we start with our formulation \eqref{CE1} and \eqref{CE}. Let us construct a measure-preserving dynamical system as follows: Let $\mathcal{X}=[0,1]^{\mathbb{N}_0}$ denote the space of infinite sequences taking values in $[0,1]$, where $\mathbb{N}_0=\{0,1,2,\ldots\}$. The space $\mathcal{X}$ is equipped with the Borel $\sigma$-algebra $\mathscr{A}$ and the product measure $\mu$ on $\mathcal{X}$ of the Lebesgue measure on $[0,1]$. The obtained triplet $(\mathcal{X},\mathscr{A},\mu)$ is a probability space. We define the shift operator $T:\mathcal{X}\to \mathcal{X}$ by
\[T(\beta_0,\beta_1,\beta_2,\ldots)=(\beta_1,\beta_2,\beta_3,\ldots),\]
which is a measure-preserving transformation of $(\mathcal{X},\mathscr{A},\mu)$. Thus, the obtained quadruple $(\mathcal{X},\mathscr{A},\mu,T)$ is a measure-preserving dynamical system. Take $f: \mathcal{X}\to[0,1]$ as the projection
\[f(\beta_0,\beta_1,\ldots)=\beta_0\]
and set $\beta_n=f(T^n\beta)$ with $\beta=(\beta_0,\beta_1,\ldots)\in\mathcal{X}$ for all $n\ge1$. Then the formulation of Coletti et al. \cite{Col} reduces to our formulation.
\subsection{Superdiffusive case}
We confine ourselves to the case $p>3/4$, which is called the \emph{superdiffusive case}. We will write for short 
\begin{align*}
\ell_{\inf}(\alpha):=\liminf_{n\to\infty}\alpha_n,\quad \ell_{\sup}(\alpha):=\limsup_{n\to\infty}\alpha_n,
\end{align*}
and
\begin{align*}
\ell_{\inf}(\beta):=\liminf_{n\to\infty}\beta_n,\quad \ell_{\sup}(\beta):=\limsup_{n\to\infty}\beta_n.
\end{align*}
Let $a_1:=1$ and
\begin{align*}
a_n:=\prod_{k=1}^{n-1}\left(1+\frac{(2p-1)\alpha_{k+1}}{k}\right)\quad \text{for}\ n\ge2.
\end{align*}

Set
\begin{align*}
A_n^2:=\sum_{k=1}^n\frac{1}{a_k^2}(1-E[X_k]^2),\quad B_n^2:=\sum_{k=1}^n\frac{1}{a_k^2}\quad \text{for $n\in\mathbb{N}\cup\{\infty\}$}.
\end{align*}
Coletti et al. \cite{Col} have obtained the following theorem concerning the CLT and LIL.
\begin{Thm}[Coletti et al. {\cite[Theorem 3, Theorem 6 and Corollary 4]{Col}}]
Let $\{S_n\}_{n=0}^\infty$ be the DERW. Suppose one of the following two conditions:
\begin{enumerate}
\item[(I)] $3/4<p<1$, $0<\ell_{\inf}(\alpha)\le\ell_{\sup}(\alpha)<\frac{1}{4p-2}$.
\item[(II)] $p=1$ and $0<\ell_{\inf}(\alpha)\le\ell_{\sup}(\alpha)<1$, $0<\ell_{\inf}(\beta)\le\ell_{\sup}(\beta)<\frac{1-\ell_{\sup}(\alpha)}{1-\ell_{\inf}(\alpha)}$.
\end{enumerate}
Then the following assertions hold:
\begin{enumerate}
\item (Central Limit Theorem)
\begin{align}\label{CLT}
\frac{S_n-E[S_n]}{a_nA_n}\overset{d}{\to}N(0,1),
\end{align}
\item (Law of Iterated Logarithm)
\end{enumerate}
\begin{align*}
\limsup_{n\to\infty}\pm\frac{S_n-E[S_n]}{a_nA_n\sqrt{2\log\log(A_n)}}=1\quad \text{a.s.}
\end{align*}
\end{Thm}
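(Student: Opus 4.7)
The plan is to realize the centered process as a martingale and invoke the classical martingale CLT and Stout's martingale LIL. Set $M_n := (S_n - E[S_n])/a_n$. From \eqref{CE} and the definition of $a_n$ one reads off $E[S_{n+1}\mid \mathcal{F}_n^X] = (a_{n+1}/a_n)\,S_n + (1-\alpha_{n+1})(2\beta_{n+1}-1)$; taking unconditional expectations gives the same relation for $E[S_{n+1}]$, so $\{M_n,\mathcal{F}_n^X\}_{n\ge 0}$ is a martingale. Its differences $\Delta M_k = (X_k - E[X_k\mid\mathcal{F}_{k-1}^X])/a_k$ are uniformly bounded by $2/a_k$, with predictable quadratic variation $\langle M\rangle_n = \sum_{k=1}^{n}(1 - E[X_k\mid \mathcal{F}_{k-1}^X]^2)/a_k^{2}$.

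Next I would analyze the deterministic asymptotics of $a_n$ and $A_n$ under each hypothesis. In both regimes the product defining $a_n$ grows polynomially; together with the assumptions on $\ell_{\inf}(\alpha),\ell_{\sup}(\alpha)$ (and, in (II), on $\beta$) this forces $a_n\to\infty$ and $A_n\to\infty$. The heart of the argument is then to control $\langle M\rangle_n - A_n^2$. Using the identity $(2p-1)\alpha_k/(k-1) = (a_k-a_{k-1})/a_{k-1}$, one finds $E[X_k\mid\mathcal{F}_{k-1}^X] - E[X_k] = (a_k - a_{k-1})\,M_{k-1}$, and since $|u+v|\le 2$ for $|u|,|v|\le 1$, this yields
\[
\bigl|\langle M\rangle_n - A_n^2\bigr| \le 2\sum_{k=2}^{n}\frac{a_k-a_{k-1}}{a_k^{2}}\,|M_{k-1}| \le 2 \max_{0\le j\le n-1}|M_j|,
\]
where the last bound uses $\sum_{k\ge 2}(a_k-a_{k-1})/a_k^2 \le 1/a_1 = 1$ by a telescoping comparison with $1/a_{k-1}-1/a_k$.

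The CLT is then a direct consequence of the martingale CLT. Doob's $L^2$ maximal inequality gives $E[\max_{j\le n} M_j^2] \le 4A_n^2$, so the estimate above produces $\langle M\rangle_n/A_n^2 \to 1$ in probability. The Lindeberg condition is automatic, since $|\Delta M_k|\le 2$ and $A_n\to\infty$ make the truncation indicator vanish for large $n$. Therefore $M_n/A_n \dto N(0,1)$, which is \eqref{CLT}. For the LIL I would invoke Stout's law of the iterated logarithm for square-integrable martingales with bounded increments, whose hypotheses reduce to the same bracket convergence in the almost sure mode together with a uniform control on the increments.

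The principal obstacle is upgrading the bracket convergence from convergence in probability to almost sure convergence, required for the LIL. Since $M_n$ is $L^2$-unbounded (its variance equals $A_n^2\to\infty$), the pointwise bound $|\langle M\rangle_n - A_n^2|\le 2\max_j|M_j|$ cannot be closed by a crude estimate. I would combine Doob's $L^p$ inequality for some $p>2$ with Kronecker's lemma, exploiting the summability of $(a_k-a_{k-1})/a_k^2$, to push the control to almost sure convergence at the required rate. Regime (II) demands extra care: since $p=1$, $E[X_k]^2$ may approach $1$ along subsequences, and the strict inequality on $\ell_{\sup}(\beta)$ is precisely what prevents $1-E[X_k]^2$ from vanishing and thus $A_n^2$ from being perturbed by negligible fluctuations of the mean.
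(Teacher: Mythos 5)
Note first that the paper itself does not prove this theorem: it is quoted from Coletti et al.\ \cite{Col}, and the paper's own proofs (via Heyde's supplements to the martingale convergence theorem) concern the complementary regime $B_\infty^2<\infty$ of Theorem \ref{mThm}. Your strategy --- identify $M_n=(S_n-E[S_n])/a_n$ as a martingale with increments bounded by $2/a_n$, compare $\langle M\rangle_n=\sum_{k\le n}(1-E[X_k\mid\mathcal F_{k-1}^X]^2)/a_k^2$ with $A_n^2$, and invoke the martingale CLT and Stout's LIL --- is the natural one, and your CLT half is essentially sound: the identity $E[X_k\mid\mathcal F_{k-1}^X]-E[X_k]=(a_k-a_{k-1})M_{k-1}$, the telescoping bound $\sum_k (a_k-a_{k-1})/a_k^2\le 1$, Doob's inequality giving $E[\max_{j\le n}M_j^2]\le 4A_n^2$, and the trivially satisfied Lindeberg condition are all correct, \emph{provided} $A_n\to\infty$.

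There are, however, two genuine gaps. First, the LIL is not proved: Stout's theorem with the deterministic norming $A_n^2$ requires $\langle M\rangle_n/A_n^2\to 1$ \emph{almost surely}, and you explicitly leave this upgrade open, offering only a vague plan (``Doob's $L^p$ inequality plus Kronecker''). The step can in fact be closed cheaply from your own pointwise bound $|\langle M\rangle_n-A_n^2|\le 2\max_{j\le n-1}|M_j|$: apply the martingale convergence theorem to $\sum_k Y_k/(A_k\log A_k)$ (square-summable by comparison of $E[Y_k^2]\le (A_k^2-A_{k-1}^2)$ with an integral test) and Kronecker's lemma to get $M_n=o(A_n\log A_n)$ a.s., hence $\max_{j\le n}|M_j|=o(A_n^2)$ a.s.; but as submitted the LIL claim rests on an unexecuted idea. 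Second, your assertion that the hypotheses ``force $A_n\to\infty$ in both regimes'' is unjustified in regime (II): for $p=1$ the quoted condition only requires $\ell_{\sup}(\alpha)<1$, and if $\ell_{\inf}(\alpha)>\tfrac12=\tfrac{1}{4p-2}$ then Lemma \ref{lem10} gives $B_\infty^2<\infty$, hence $A_\infty<\infty$, your normalization argument collapses, and by Theorem \ref{nondegThm} the ratio in \eqref{CLT} converges a.s.\ to $M/A_\infty$ --- this is precisely the regime of Theorem \ref{mThm}, where a random drift must be subtracted, not of \eqref{CLT}. Your proof therefore only covers regime (I) (where $\ell_{\sup}(\alpha)<\tfrac{1}{4p-2}$ together with Lemma \ref{lem12} indeed yields $A_n\to\infty$) and the part of (II) with $\ell_{\sup}(\alpha)<\tfrac12$; you need either to derive the divergence of $A_n$ from (II) as stated (which cannot be done) or to flag explicitly that your argument assumes $A_\infty=\infty$.
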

By a \emph{weak elephant component} we mean the condition $\ell_{\sup}(\alpha)<\frac{1}{4p-2}$. In addition to the phase transition at $p=3/4$, Coletti et al. \cite{Col} obtained another phase transition as the following theorem with a \emph{strong elephant component} in the sense that $\ell_{\inf}(\alpha)>\frac{1}{4p-2}$, where the CLT \eqref{CLT} breaks down.
\begin{Thm}[Coletti et al. {\cite[Theorem 7]{Col}}]\label{nondegThm}
Let $\{S_n\}_{n=0}^\infty$ be the DERW with $p>3/4$ and $\ell_{\inf}(\alpha)>\frac{1}{4p-2}$. Then
\begin{align}\label{nondeg}
\frac{S_n-E[S_n]}{a_n}\to M\quad \text{a.s. and in $L^2$},
\end{align} 
where $M$ is a non-degenerate zero mean random variable.
\end{Thm}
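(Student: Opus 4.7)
The plan is to realize $\{(S_n-E[S_n])/a_n\}_{n\ge 1}$ as an $L^2$-bounded martingale with respect to $\{\mathcal{F}_n^X\}$, invoke the $L^2$-martingale convergence theorem, and finally verify non-degeneracy of the limit by a direct variance computation.

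First I would verify the martingale identity. Taking the conditional expectation of $S_{n+1}=S_n+X_{n+1}$ via \eqref{CE} and subtracting its unconditional counterpart, one obtains
\begin{align*}
E[S_{n+1}-E[S_{n+1}]\mid \mathcal{F}_n^X]=\left(1+\frac{(2p-1)\alpha_{n+1}}{n}\right)(S_n-E[S_n]).
\end{align*}
Since the coefficient on the right is precisely $a_{n+1}/a_n$, setting $M_n:=(S_n-E[S_n])/a_n$ immediately gives $E[M_{n+1}\mid\mathcal{F}_n^X]=M_n$, with martingale increments
\begin{align*}
M_{n+1}-M_n=\frac{1}{a_{n+1}}\bigl(X_{n+1}-E[X_{n+1}\mid\mathcal{F}_n^X]\bigr).
\end{align*}
Since $|X_{n+1}|=1$, each increment has $L^2$-norm at most $a_{n+1}^{-1}$.

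The key quantitative input is the summability $\sum_{n\ge1}a_n^{-2}<\infty$ under the strong-elephant hypothesis $\ell_{\inf}(\alpha)>1/(4p-2)$. Pick any $\alpha'\in(1/(4p-2),\ell_{\inf}(\alpha))$; then $\alpha_k\ge\alpha'$ for all $k$ large enough, and a term-by-term estimate in the product defining $a_n$ (using $\log(1+x)\ge x-x^2/2$) yields $\log a_n\ge(2p-1)\alpha'\log n+O(1)$, i.e.\ $a_n\gtrsim n^{(2p-1)\alpha'}$ with exponent strictly greater than $1/2$. By orthogonality of martingale differences,
\begin{align*}
E[M_n^2]=(1-E[X_1]^2)+\sum_{k=1}^{n-1}\frac{1}{a_{k+1}^2}E\bigl[1-E[X_{k+1}\mid\mathcal{F}_k^X]^2\bigr]\le 1+\sum_{k\ge 2}a_k^{-2}<\infty,
\end{align*}
so $\{M_n\}$ is $L^2$-bounded. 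Doob's theorem then delivers a.s.\ and $L^2$ convergence $M_n\to M$, and the zero-mean property follows from $E[M_n]=0$ together with $L^1$ convergence.

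For the non-degeneracy, observe that the displayed identity for $E[M^2]$ is a sum of non-negative terms, and $M$ can be degenerate only if every term vanishes. If $|E[X_1]|<1$ the first term already does the job; otherwise $X_1$ is a.s.\ deterministic, in which case the hypothesis $p>3/4$ and the explicit form \eqref{CE} force $E[X_{n+1}\mid\mathcal{F}_n^X]$ to lie strictly inside $(-1,1)$ on a set of positive probability for some finite $n$, whence the corresponding summand is strictly positive and $M$ is non-degenerate. The main obstacle is the absence of convergence of $\{\alpha_n\}$: the classical Gamma-function asymptotic $a_n\sim cn^{(2p-1)\alpha_\infty}$ is unavailable, so the polynomial lower bound on $a_n$ must be read off uniformly from $\ell_{\inf}(\alpha)$ alone, and the non-degeneracy in the boundary case of deterministic initial increments requires a separate inspection of \eqref{CE}.
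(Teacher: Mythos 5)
Your martingale construction, the lower bound $a_n\gtrsim n^{(2p-1)\alpha'}$ with $(2p-1)\alpha'>1/2$ (hence $B_\infty^2=\sum_n a_n^{-2}<\infty$), the $L^2$-boundedness via orthogonality of increments, and the appeal to Doob's theorem are all correct, and this is exactly the route of Coletti et al.\ \cite[Theorem 7]{Col}; note the present paper does not reprove this statement but only quotes it, while setting up the same objects ($M_n=(S_n-E[S_n])/a_n$, increments $Y_n$ with $|Y_n|\le 2/a_n$, and $B_\infty^2<\infty$ from Lemma \ref{lem10}). The zero-mean claim via $L^1$-convergence is also fine.

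The genuine gap is in your non-degeneracy step. The claim that ``$p>3/4$ and the explicit form \eqref{CE} force $E[X_{n+1}\mid\mathcal{F}_n^X]$ to lie strictly inside $(-1,1)$ on a set of positive probability for some finite $n$'' is false as stated: take $p=1$, $q=1$ and $\alpha_n\equiv 1$, so that $\ell_{\inf}(\alpha)=1>\frac{1}{4p-2}=\frac{1}{2}$. Then $X_1=1$ a.s., and inductively $E[X_{n+1}\mid\mathcal{F}_n^X]=S_n/n=1$ a.s., whence $S_n=n=E[S_n]$ and $M\equiv 0$ is degenerate. So non-degeneracy cannot be deduced from $p>3/4$ and $\ell_{\inf}(\alpha)>\frac{1}{4p-2}$ alone; one needs a hypothesis guaranteeing $\liminf_{n\to\infty}\Var[X_n]>0$, which is exactly what Lemma \ref{lem12} provides and why Theorem \ref{mThm} imposes the extra condition (II) when $p=1$. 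For $p<1$ your argument can be repaired quantitatively: once $\alpha_{n+1}\ge\alpha'>0$, one has $|E[X_{n+1}\mid\mathcal{F}_n^X]|\le(2p-1)\alpha_{n+1}+(1-\alpha_{n+1})=1-2(1-p)\alpha_{n+1}\le 1-2(1-p)\alpha'<1$, so the corresponding summand in your variance expansion is at least a positive constant times $a_{n+1}^{-2}$ and $E[M^2]>0$; this uniform conditional-variance bound (rather than a dichotomy on whether $X_1$ is deterministic) is the correct mechanism. At $p=1$, however, which your argument purports to cover, the conclusion genuinely requires additional assumptions on $\alpha$ and $\beta$, so the final step cannot be completed as written.
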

Note that, by the proof of Theorem 7 of \cite{Col}, Theorem \ref{nondegThm} holds even if the condition that $p>3/4$ and $\ell_{\inf}(\alpha)>\frac{1}{4p-2}$ is replaced by the assumption $B_\infty^2<\infty$. Let us state our main theorem. We obtain the fluctuation limits of \eqref{nondeg} of Theorem \ref{nondegThm}.
\begin{Thm}\label{mThm}
Let $\{S_n\}_{n=0}^\infty$ be the DERW. Suppose one of the following two conditions:
\begin{enumerate}
\item[(I)] $3/4<p<1$ and $\ell_{\inf}(\alpha)>\frac{1}{4p-2}.$
\item[(II)] $p=1$ and $\frac{1}{4p-2}<\ell_{\inf}(\alpha)\le\ell_{\sup}(\alpha)<1$, $0<\ell_{\inf}(\beta)\le\ell_{\sup}(\beta)<\frac{1-\ell_{\sup}(\alpha)}{1-\ell_{\inf}(\alpha)}$.
\end{enumerate} 
Then the following assertions hold:
\begin{enumerate}
\item (\text{Central Limit Theorem})
\begin{align*}
\frac{S_n-E[S_n]-a_nM}{a_n\sqrt{A_\infty^2-A_n^2}}\overset{d}{\to}N(0,1).
\end{align*}
\item (\text{Law of Iterated Logarithm})
\begin{align*}
\limsup_{n\to\infty}\pm\frac{S_n-E[S_n]-a_nM}{a_n\sqrt{2(A_\infty^2-A_n^2)\log|\log(A_\infty^2-A_n^2)|}}=1\quad \text{a.s.}
\end{align*}
\end{enumerate}
\end{Thm}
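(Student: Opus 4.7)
The plan is to realise the target quantity as the normalised tail of an $L^2$-convergent martingale and to apply the martingale CLT and LIL for tail sums.

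Set $M_n := (S_n - E[S_n])/a_n$ for $n \ge 0$. Taking expectations in \eqref{CE} and subtracting yields
\[
M_k - M_{k-1} = \frac{\varepsilon_k}{a_k}, \qquad \varepsilon_k := X_k - E[X_k \mid \mathcal{F}_{k-1}^X],
\]
so $(M_n)$ is an $(\mathcal{F}_n^X)$-martingale with $|\varepsilon_k| \le 2$ and $E[\varepsilon_k^2 \mid \mathcal{F}_{k-1}^X] = 1 - E[X_k \mid \mathcal{F}_{k-1}^X]^2$. Under either hypothesis (I) or (II) one has $B_\infty^2 < \infty$, so Theorem \ref{nondegThm} gives $M_n \to M$ a.s.\ and in $L^2$; in particular $M - M_n = \sum_{k > n} \varepsilon_k/a_k$. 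Since
\[
\frac{S_n - E[S_n] - a_n M}{a_n\sqrt{A_\infty^2 - A_n^2}} = -\frac{M - M_n}{\sqrt{A_\infty^2 - A_n^2}},
\]
assertions (i) and (ii) reduce to the CLT and LIL for the martingale tail $M - M_n$ with normaliser $\sqrt{A_\infty^2 - A_n^2}$.

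The core technical step is to show that the conditional quadratic variation of the tail,
\[
V_n := \sum_{k > n} \frac{E[\varepsilon_k^2 \mid \mathcal{F}_{k-1}^X]}{a_k^2},
\]
is asymptotic to $A_\infty^2 - A_n^2$. From \eqref{CE} and the identity $a_k - a_{k-1} = (2p-1)\alpha_k a_{k-1}/(k-1)$ one gets $E[X_k \mid \mathcal{F}_{k-1}^X] - E[X_k] = (a_k - a_{k-1}) M_{k-1}$, whence
\[
\frac{1 - E[X_k \mid \mathcal{F}_{k-1}^X]^2}{a_k^2} = \frac{1 - E[X_k]^2}{a_k^2} - 2 E[X_k]\frac{a_k-a_{k-1}}{a_k^2} M_{k-1} - \left(\frac{a_k-a_{k-1}}{a_k}\right)^{2} M_{k-1}^2.
\]
The first term sums to $A_\infty^2 - A_n^2$; the second, bounded by $2(a_{k-1}^{-1} - a_k^{-1})|M_{k-1}|$, telescopes to a tail of order $O(a_n^{-1})$ a.s.; the third, $O(k^{-2})$ times a bounded sequence, has tail of order $O(n^{-1})$. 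Under (I) or (II), $1 - E[X_k]^2$ stays bounded away from $0$, and a comparison with $B_\infty^2 - B_n^2$ gives $a_n(A_\infty^2 - A_n^2) \to \infty$ and $n(A_\infty^2 - A_n^2) \to \infty$, so $V_n/(A_\infty^2 - A_n^2) \to 1$ almost surely.

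Once this asymptotic is in place, the Lindeberg condition is automatic: for every $\eta > 0$ the set $\{|\varepsilon_k/a_k| > \eta\sqrt{A_\infty^2 - A_n^2}\}$ is empty for $k > n$ and $n$ large, since $|\varepsilon_k/a_k| \le 2/a_n$ and $a_n^2(A_\infty^2 - A_n^2) \to \infty$. The standard martingale CLT for tail sums then yields (i); the corresponding martingale LIL yields (ii), where the same uniform increment bound also supplies the auxiliary higher-moment estimate typically required. The main obstacle is the second step: without coefficient convergence for $\alpha_n$ and $\beta_n$ the quadratic-variation asymptotic cannot be read off term by term and has to be obtained via the telescoping decomposition above, using only the $\liminf$/$\limsup$ controls of hypotheses (I) and (II).
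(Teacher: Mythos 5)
Your overall strategy coincides with the paper's: write $M_n=(S_n-E[S_n])/a_n$, observe $M-M_n=\sum_{k>n}Y_k$ with $Y_k=\varepsilon_k/a_k$, and deduce (i) and (ii) from a CLT and LIL for tail sums of a convergent $L^2$-bounded martingale normalized by $\sqrt{A_\infty^2-A_n^2}$. Your derivation of the conditional-variance asymptotics via the identity $E[X_k\mid\mathcal{F}_{k-1}^X]-E[X_k]=(a_k-a_{k-1})M_{k-1}$, the telescoping bound for the cross term and the $O(k^{-2})$ bound for the square term is correct, and is a legitimate variant of the paper's argument (which instead uses $(S_n-E[S_n])/n\to0$ a.s.\ from Theorem 1 of \cite{Col} together with $\liminf_n\Var[X_n]>0$ from Lemma \ref{lem12}); likewise your remark that the Lindeberg sets are eventually empty, because $|Y_k|\le 2/a_n$ for $k>n$ while $a_n^2(A_\infty^2-A_n^2)\to\infty$, is a genuine simplification of the paper's fourth-moment estimate for condition (b).

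The gap is at the final step: you establish only the \emph{conditional} quadratic variation asymptotics $V_n=\sum_{k>n}E[Y_k^2\mid\mathcal{F}_{k-1}^X]\sim A_\infty^2-A_n^2$ and then appeal to an unspecified ``standard martingale CLT/LIL for tail sums''. The applicable results (Heyde \cite{martconvthm1}, Theorem 1(b) and Corollary 1, which the paper invokes) are stated in terms of the \emph{raw} tail sums: one must show $s_n^{-2}\sum_{k\ge n}Y_k^2\to1$ (in probability for the CLT, almost surely for the LIL), together with $\sum_k s_k^{-1}E[|Y_k|;|Y_k|>\varepsilon s_k]<\infty$ and $\sum_k s_k^{-4}E[Y_k^4;|Y_k|\le\delta s_k]<\infty$. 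Passing from $V_n$ to $\sum_{k>n}Y_k^2$ is not automatic and is a substantive part of the paper's proof: it introduces the auxiliary martingale $L_n=\sum_{k\le n}s_k^{-2}\bigl(Y_k^2-E[Y_k^2\mid\mathcal{F}_{k-1}^X]\bigr)$, proves $\sum_k s_k^{-4}a_k^{-4}<\infty$ (Lemma \ref{lem2}, via the quantitative bound $A_\infty^2-A_{n-1}^2\ge c\,n/a_n^2$), concludes that $L_n$ converges a.s.\ by Theorem 2.15 of \cite{martconvthm2}, and then applies the Kronecker-type Lemma 1(ii) of \cite{martconvthm1} to get $s_n^{-2}\sum_{k\ge n}\bigl(Y_k^2-E[Y_k^2\mid\mathcal{F}_{k-1}^X]\bigr)\to0$ a.s. For the CLT alone one could substitute a conditional-variance martingale-array CLT with nested $\sigma$-fields, but for the LIL there is no standard tail-sum statement in terms of conditional variances only, so assertion (ii) is not proved as written. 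The fix is within reach of what you already have: your comparison with $B_\infty^2-B_n^2$ actually yields the stronger estimate $A_\infty^2-A_n^2\ge c\,n/a_n^2$ (not merely $a_n(A_\infty^2-A_n^2)\to\infty$ and $n(A_\infty^2-A_n^2)\to\infty$), and this is exactly what is needed both for the two summability conditions and for the auxiliary-martingale argument bridging conditional and raw sums of squares.
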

If $p=1$ and $\varepsilon_n=2\beta_n-1$ with $\beta_n\in[1/2,1]$, then Theorem \ref{mThm} generalizes some results of Shiozawa \cite{Sio}, which deals with the case $\alpha_n$ and $\beta_n$ converge. In particular, Theorem \ref{mThm} also generalizes some results of Kubota--Takei \cite{KaT}.

\subsection{Other previous results}
Note that the DERW generalizes the elephant random walk (ERW); see \cite{Ber},\cite{Col2},\cite{Col3},\cite{KaT},\cite{Sch} for the details. In fact, if we take $\alpha_n\equiv1$, then the recursive equations \eqref{CE1} and \eqref{CE} become
\begin{align*}
E[X_1]=2q-1
\end{align*}
and
\begin{align*}
E[X_{n+1}|\mathcal{F}_n^X]=\frac{2p-1}{n}\cdot S_n\quad(n\ge1),
\end{align*}
which shows that DERW becomes the ERW. 

Note also that the DERW generalizes Kubota-Takei \cite{KaT}. In fact, if we take $p=1$ and $\alpha_1=1$, $\alpha_n\equiv\alpha\in[0,1]$ for all $n\ge2$, $\varepsilon_n=2\beta_n-1$ with $\beta_n\in[1/2,1]$, then the recursive equations \eqref{CE1} and \eqref{CE} become
\begin{align*}
E[X_1]=2q-1
\end{align*}
and
\begin{align*}
E[X_{n+1}|\mathcal{F}_n^X]=\frac{\alpha}{n}S_n+(1-\alpha)\varepsilon_{n+1}\quad(n\ge1),
\end{align*} 
which shows that the DERW becomes Kubota--Takei \cite{KaT}. 

Note also that the DERW generalizes Shiozawa's generalized elephant random walk; see \cite{Sio} for the details. In fact, if we take $p=1$ and $\alpha_1=1$, $\varepsilon_n=2\beta_n-1$ with $\beta_n\in[1/2,1]$, then the recursive equations \eqref{CE1} and \eqref{CE} become
\begin{align*}
E[X_1]=2q-1
\end{align*}
and
\begin{align*}
E[X_{n+1}|\mathcal{F}_n^X]=\frac{\alpha_{n+1}}{n}S_n+(1-\alpha_{n+1})\varepsilon_{n+1}\quad(n\ge1),
\end{align*} 
which shows that the DERW becomes Shiozawa's generalized elephant random walk. 

Let us recall fluctuation limit theorems which have been obtained for the Kubota--Takei \cite{KaT}. 
The following deals with the superdiffusive case.
\begin{Thm}[Kubota--Takei {\cite[Theorem 3]{KaT}}]\label{KaTThm}\ Let $\varepsilon_n\to\varepsilon\in[0,1)$. If $3/4< p< 1$, then the following assertions hold:
\begin{enumerate}
\item (\text{Central Limit Theorem})
\begin{align}\label{KaT1}
\frac{S_n-E[S_n]-a_nM}{\sqrt{n}}\overset{d}{\to}N\left(0,\frac{1-\varepsilon^2}{4p-3}\right).
\end{align}
\item (\text{Law of Iterated Logarithm})
\begin{align}\label{KaT2}
\limsup_{n\to\infty}\pm\frac{S_n-E[S_n]-a_nM}{a_n\cdot\hat{\phi}\left({\frac{1-\varepsilon^2}{4p-3}n}\right)}=1\quad \text{a.s.}
\end{align}
\end{enumerate}
where $\hat{\phi}(t):=\sqrt{2t\log|\log t|}.$
\end{Thm}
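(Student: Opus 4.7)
The approach is to represent the fluctuation as the tail of a convergent martingale and invoke the martingale CLT and LIL. Set $\xi_k := X_k - E[X_k|\mathcal{F}_{k-1}]$; using the recursion \eqref{CE} and the definition of $a_n$ one verifies that $M_n := (S_n - E[S_n])/a_n$ is an $(\mathcal{F}_n)$-martingale with differences $\xi_k/a_k$. In the Kubota--Takei specialization $\alpha_n\equiv\alpha$ (with $\alpha = 2p-1$ in the convention giving $a_n = \prod_{k=1}^{n-1}(1 + \alpha/k) \sim n^{2p-1}/\Gamma(2p)$), the hypothesis $3/4 < p < 1$ places us in the superdiffusive regime of Theorem \ref{nondegThm}, so $M_n\to M$ a.s.\ and in $L^2$. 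Writing $N_n := M - M_n = \sum_{k>n}\xi_k/a_k$, the exact identity $S_n - E[S_n] - a_n M = -a_n N_n$ reduces both claims to fluctuation statements for the tail $N_n$.

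The central asymptotic is that of the conditional quadratic variation of the tail. An integral comparison gives
\begin{align*}
\frac{a_n^2}{n}\sum_{k>n}\frac{1}{a_k^2}\longrightarrow \frac{1}{2\alpha-1}=\frac{1}{4p-3}.
\end{align*}
For the conditional variance, the drift recursion $E[S_{n+1}] = (1 + \alpha/n) E[S_n] + (1-\alpha)\varepsilon_{n+1}$ together with Cesaro averaging of $\varepsilon_n\to\varepsilon$ gives $E[S_n]/n\to\varepsilon$, and $a_n M_n / n = O(n^{2p-2})\to 0$ a.s.\ upgrades this to $S_n/n\to\varepsilon$ a.s. Substituting into $E[X_k|\mathcal{F}_{k-1}] = (\alpha/(k-1))S_{k-1} + (1-\alpha)\varepsilon_k$ yields $E[X_k|\mathcal{F}_{k-1}]\to\varepsilon$ a.s., hence
\begin{align*}
E[\xi_k^2|\mathcal{F}_{k-1}] = 1 - (E[X_k|\mathcal{F}_{k-1}])^2 \longrightarrow 1 - \varepsilon^2 \quad \text{a.s.},
\end{align*}
and combining produces $(a_n^2/n)\sum_{k>n}E[\xi_k^2|\mathcal{F}_{k-1}]/a_k^2\to(1-\varepsilon^2)/(4p-3)$ a.s.

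For \eqref{KaT1}, fix $n$ and view $T_j^{(n)} := \sum_{k=n+1}^{n+j}\xi_k/a_k$ as a forward $(\mathcal{F}_{n+j})$-martingale with $T_j^{(n)}\to N_n$ in $L^2$ as $j\to\infty$. Applying the Hall--Heyde martingale CLT to $(a_n/\sqrt{n}) T_j^{(n)}$, with the Lindeberg bound $|\xi_k/a_k|\le 2/a_k\to 0$ and the a.s.\ quadratic-variation limit above, a diagonal argument that exchanges the inner $j\to\infty$ and outer $n\to\infty$ limits at the characteristic-function level yields $(a_n/\sqrt{n})N_n \Rightarrow N(0, (1-\varepsilon^2)/(4p-3))$, which is \eqref{KaT1} up to the sign absorbed in $-a_n N_n$. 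For \eqref{KaT2}, a Stout--Heyde-type LIL for tails of $L^2$-bounded martingales with vanishing increments applied to $(T_j^{(n)})_j$, with the $(a_n/\sqrt{n})$ scaling absorbed, produces the $\hat{\phi}((1-\varepsilon^2)n/(4p-3))$ normalization; the sharp constant $(1-\varepsilon^2)/(4p-3)$ is supplied by the a.s.\ asymptotic of $\langle T^{(n)}\rangle_\infty = \sum_{k>n}E[\xi_k^2|\mathcal{F}_{k-1}]/a_k^2$.

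The main obstacle is precisely this diagonalization. The usual forward martingale CLT applied to $M_n$ degenerates here because $M_n\to M$ almost surely, so one genuinely needs a tail CLT for $N_n$, obtained by commuting the inner $j\to\infty$ and outer $n\to\infty$ limits; the uniform-in-$n$ control that justifies this exchange is supplied by the almost-sure (rather than merely in-probability) asymptotic of the conditional quadratic variation established in the second step. The corresponding LIL requires this exchange to retain the sharp constant $1$ on both sides, which likewise rests on the almost-sure convergence of the tail quadratic variation together with the bounded-increment Lindeberg-type control.
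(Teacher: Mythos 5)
Your overall strategy---writing $S_n-E[S_n]-a_nM=-a_n(M-M_n)$ and treating the claim as a fluctuation theorem for the tail $N_n=\sum_{k>n}\xi_k/a_k$ of a convergent $L^2$-bounded martingale---is exactly the route the paper takes for its Theorem \ref{mThm}, of which this statement is the special case obtained by setting the paper's $p$ equal to $1$ and $\alpha_n\equiv\alpha=2p-1$ in terms of Kubota--Takei's memory parameter $p$. Your computation of the tail conditional quadratic variation, via $S_n/n\to\varepsilon$ a.s.\ and the comparison $(a_n^2/n)\sum_{k>n}a_k^{-2}\to 1/(4p-3)$, is correct and corresponds to the paper's Lemma \ref{lem1} together with condition (a) in the proof of Theorem \ref{mThm}.

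The gap is precisely the step you flag as the main obstacle, and flagging it does not fill it. For fixed $n$, the forward martingale $(a_n/\sqrt{n})\,T^{(n)}_j$ converges as $j\to\infty$ in $L^2$ to the \emph{non-Gaussian} random variable $(a_n/\sqrt{n})N_n$, so there is no inner central limit theorem in $j$ to exchange with the outer limit in $n$; ``exchanging the limits at the characteristic-function level'' is a restatement of the claim, not an argument, and the a.s.\ convergence of the tail quadratic variation does not by itself license any such exchange. What is needed is a genuine tail-martingale CLT for the array $\{(a_n/\sqrt{n})\xi_k/a_k\}_{k>n}$ (equivalently a coupled truncation $j=j(n)$ with a triangular-array CLT), and this is exactly Heyde's supplement to the martingale convergence theorem---Theorem 1(b) and Corollary 1 of the paper's reference \cite{martconvthm1}---whose hypotheses are the in-probability convergence $s_n^{-2}\sum_{k\ge n}Y_k^2\to 1$ and the tail Lindeberg condition $s_n^{-2}\sum_{k\ge n}E[Y_k^2;|Y_k|>\eps s_n]\to 0$; the paper verifies both. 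The situation is worse for the LIL: no characteristic-function manipulation can yield an almost-sure statement, and the ``Stout--Heyde-type LIL for tails'' you invoke is again exactly the nontrivial result to be applied, with hypotheses you do not verify, namely $\sum_k s_k^{-1}E[|Y_k|;|Y_k|>\eps s_k]<\infty$ and $\sum_k s_k^{-4}E[Y_k^4;|Y_k|\le\delta s_k]<\infty$ (in the paper these follow from $\sum_k (s_k^4a_k^4)^{-1}<\infty$, Lemma \ref{lem2}; in the present case $s_k^4a_k^4\asymp k^2$, so they are immediate once stated). If you replace the limit-exchange heuristic by a citation of Heyde's theorem and check these summability conditions, your argument closes and coincides with the paper's; note also that your LIL normalization $\hat{\phi}\bigl(\tfrac{1-\varepsilon^2}{4p-3}n\bigr)$ without the extra factor $a_n$ is the one consistent with part (i), the factor $a_n$ in \eqref{KaT2} as printed being evidently a typo.
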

Let us recall fluctuation limit theorems which have been obtained for Shiozawa's generalized elephant random walk.
\begin{Thm}[Shiozawa {\cite[Theorem 22]{Sio}}]\label{ShiomThm}
Let $\varepsilon_n\to\varepsilon\in[0,1)$ and $\alpha_n\to\alpha\in[0,1)$, $\sum_{n=1}^\infty1/a_n^2<\infty$. If $\alpha_n\to1$, then assume also that $a_n/n\to0$ as $n\to\infty$. Then the following assertions hold:
\begin{enumerate}
\item[(i)] (\text{Central Limit Theorem})
\begin{align*}\label{}
\frac{S_n-E[S_n]-a_nM}{a_n\sqrt{B_\infty^2-B_n^2}}\overset{d}{\to}N(0,1-\varepsilon^2).
\end{align*}
\item[(ii)] (\text{Law of Iterated Logarithm})
\begin{align*}\label{}
\limsup_{n\to\infty}\pm\frac{S_n-E[S_n]-a_nM}{a_n\hat{\phi}(B_\infty^2-B_n^2)}=\sqrt{1-\varepsilon^2}\quad \text{a.s.}
\end{align*}
\end{enumerate}
\end{Thm}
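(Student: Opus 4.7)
The proof revolves around the martingale
\[ M_n := \frac{S_n - E[S_n]}{a_n}, \]
which has increments
\[ M_{n+1} - M_n = \frac{\xi_{n+1}}{a_{n+1}}, \qquad \xi_{n+1} := X_{n+1} - E[X_{n+1}|\mathcal{F}_n^X], \]
since $a_{n+1} = a_n(1+\alpha_{n+1}/n)$ when $p=1$. Because $|\xi_{n+1}|\le 2$, its predictable quadratic variation is at most $4B_\infty^2 < \infty$, so $M_n \to M$ a.s.\ and in $L^2$. The object to study is therefore the reverse tail
\[ M - M_n = \sum_{k\ge n} \frac{\xi_{k+1}}{a_{k+1}}, \]
for which we aim to establish Gaussian and iterated-logarithm asymptotics of order $\sqrt{(1-\varepsilon^2)(B_\infty^2-B_n^2)}$, so that multiplying by $-a_n$ recovers $S_n - E[S_n] - a_nM$.

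The key preliminary is to pin down the limit of the conditional variance. Since $X_{n+1}^2=1$,
\[ E[\xi_{n+1}^2|\mathcal{F}_n^X] = 1 - \left(\frac{\alpha_{n+1}}{n}S_n + (1-\alpha_{n+1})\varepsilon_{n+1}\right)^2. \]
I would therefore first prove the strong law $S_n/n \to \varepsilon$ a.s. The deterministic part $E[S_n]/n \to \varepsilon$ follows by a Toeplitz/Kronecker argument applied to the recursion $E[S_{n+1}] = E[S_n](1+\alpha_{n+1}/n) + (1-\alpha_{n+1})\varepsilon_{n+1}$, while the fluctuation satisfies $(S_n-E[S_n])/n = (a_n/n)\,M_n \to 0$ a.s.\ because $a_n/n \to 0$ --- this is automatic when $\alpha<1$ (then $a_n$ is of polynomial order $n^\alpha$) and is imposed as a hypothesis in the borderline regime $\alpha_n\to 1$. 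Hence $E[X_{n+1}|\mathcal{F}_n^X] \to \varepsilon$ and consequently $E[\xi_{n+1}^2|\mathcal{F}_n^X] \to 1-\varepsilon^2$ a.s.

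A tail Toeplitz lemma then yields
\[ \frac{1}{B_\infty^2 - B_n^2}\sum_{k\ge n}\frac{E[\xi_{k+1}^2|\mathcal{F}_k^X]}{a_{k+1}^2} \longrightarrow 1-\varepsilon^2 \quad\text{a.s.} \]
The CLT (i) follows by applying a martingale CLT for triangular arrays to the reverse-tail sum: for each fixed $n$, introduce the forward martingale $N_m^{(n)} := \sum_{k=n}^{n+m-1} \xi_{k+1}/(a_{k+1}\sqrt{B_\infty^2 - B_n^2})$, verify the conditional-variance convergence above and the conditional Lindeberg condition (immediate because $|\xi_{k+1}/a_{k+1}|\le 2/a_{k+1}\to 0$ once $B_\infty^2<\infty$ forces $a_k\to\infty$), and then pass $m\to\infty$ followed by $n\to\infty$ via a diagonal argument. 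For the LIL (ii), the same tail-variance asymptotics feed into the martingale law of the iterated logarithm of Stout/Heyde type, producing the normalisation $a_n\hat\phi(B_\infty^2-B_n^2)$ with $\limsup$ equal to $\pm\sqrt{1-\varepsilon^2}$.

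The main obstacle I foresee is twofold. First, establishing $S_n/n\to\varepsilon$ in the borderline regime $\alpha_n\to 1$: the factor $(1+\alpha_{n+1}/n)$ makes $a_n$ nearly of order $n$, and the hypothesis $a_n/n\to 0$ must be combined carefully with the $L^2$-convergence of $M_n$ via Kronecker. Second, converting a standard forward martingale CLT/LIL into a statement about reverse tails $\sum_{k\ge n}$ is routine in principle but requires careful bookkeeping around double limits; a clean reverse-time CLT/LIL lemma of the kind used in Shiozawa \cite{Sio} would be the most direct route.
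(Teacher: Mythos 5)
First, a point of orientation: the paper does not prove this statement at all --- it is quoted from Shiozawa \cite{Sio} as background, and the in-paper analogue is the proof of Theorem \ref{mThm}, which generalizes it. Measured against that proof, your architecture is essentially the same: the martingale $M_n=(S_n-E[S_n])/a_n$ with increments $Y_{n+1}=\xi_{n+1}/a_{n+1}$, the tail variance $s_n^2=\sum_{k\ge n}E[Y_k^2]\sim(1-\varepsilon^2)(B_\infty^2-B_{n-1}^2)$, and a CLT/LIL applied to the reverse tail $M-M_n$. The ``clean reverse-time CLT/LIL lemma'' you wish for exists and is exactly what the paper invokes: Theorem 1(b), Corollary 1 and Lemma 1 of Heyde \cite{martconvthm1}, the supplements to the martingale convergence theorem. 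So the approach is the right one and matches the paper's.

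Two steps in your plan are genuinely under-justified. (1) The Lindeberg condition for the tail array is \emph{not} immediate from $a_k\to\infty$: the truncation level is $\varepsilon s_n$, which itself tends to $0$, so you must compare $2/a_k$ (for $k\ge n$) against $s_n$, two vanishing quantities. The paper does this via the fourth-moment bound $E[Y_k^2;|Y_k|>\varepsilon s_n]\le E[Y_k^4]/(\varepsilon^2s_n^2)\le 16/(\varepsilon^2s_n^2a_k^4)$ combined with the quantitative estimate $a_n^4s_n^4\gtrsim n^2$, i.e.\ $\sum_n 1/(s_n^4a_n^4)<\infty$ (Lemma \ref{lem2}); some estimate of this kind is unavoidable and your proposal contains no substitute for it. (2) Heyde's conditions require $\frac{1}{s_n^2}\sum_{k\ge n}Y_k^2\to1$ --- the actual squares, in probability for the CLT and a.s.\ for the LIL --- whereas you only establish convergence of the conditional variances $\sum_{k\ge n}E[Y_k^2\mid\mathcal{F}_{k-1}^X]/s_n^2$. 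The paper bridges this gap with the auxiliary martingale $L_n=\sum_{k\le n}s_k^{-2}(Y_k^2-E[Y_k^2\mid\mathcal{F}_{k-1}^X])$, whose a.s.\ convergence again rests on Lemma \ref{lem2} plus a Kronecker-type argument. Your preliminary step $S_n/n\to\varepsilon$ (hence $E[X_{n+1}\mid\mathcal{F}_n^X]\to\varepsilon$ and $1-E[X_n\mid\mathcal{F}_{n-1}^X]^2\to1-\varepsilon^2$) is sound and is indeed where the hypothesis $a_n/n\to0$ enters; in the paper this role is played by Theorem 1 of \cite{Col} together with Lemma \ref{lem12}.
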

\subsection{Organization of this paper}
This paper is organized as follows. In Section 2, we study summability and asymptotics for sequences related to $\{a_n\}_{n=1}^\infty$. In Section 3, we give the proof of Theorem \ref{mThm}.
\section{Summability and asymptotics for sequences related to $\{a_n\}_{n=1}^\infty$}
We recall the following lemma without proof concerning the finiteness of $B_\infty$.
\begin{Lem}[Lemma 10 of \cite{Col}]\label{lem10}
If $p\le3/4$ or $\ell_{\sup}(\alpha)<\frac{1}{4p-2}$, then $B_\infty^2=\infty$. Moreover, if $p>3/4$ and $\ell_{\inf}(\alpha)>\frac{1}{4p-2}$, then $B_\infty^2<\infty$.
\end{Lem}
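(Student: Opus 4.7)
The plan is to reduce the product defining $a_n$ to a harmonic-type sum via logarithms and a Taylor expansion. Writing
\begin{align*}
\log a_n=\sum_{k=1}^{n-1}\log\left(1+\frac{(2p-1)\alpha_{k+1}}{k}\right),
\end{align*}
I observe that $(2p-1)\alpha_{k+1}/k\in[-1/k,1/k]\subset[-1/2,1/2]$ for all $k\ge 2$, so the uniform estimate $|\log(1+x)-x|\le x^2$ on $[-1/2,1/2]$ combined with $\sum_{k\ge 1}1/k^2<\infty$ yields
\begin{align*}
\log a_n=(2p-1)\sum_{k=1}^{n-1}\frac{\alpha_{k+1}}{k}+O(1).
\end{align*}
Exponentiating, $a_n$ is comparable (up to positive multiplicative constants) to $\exp\bigl((2p-1)\sum_{k=1}^{n-1}\alpha_{k+1}/k\bigr)$, so the finiteness of $B_\infty^2=\sum 1/a_k^2$ is reduced to a polynomial growth comparison against the $p$-series.

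For the divergence assertion, suppose $p\le 3/4$ or $\ell_{\sup}(\alpha)<1/(4p-2)$. If $p\le 1/2$, then every factor in the product is at most $1$, so $a_n\le 1$ and the conclusion is immediate. If $1/2<p\le 3/4$, the trivial bound $\alpha_{k+1}\le 1$ gives $\log a_n\le(2p-1)\log n+O(1)$, hence $1/a_k^2\ge c/k^{4p-2}$ with $4p-2\le 1$, and the sum diverges. If instead $p>3/4$ with $\ell_{\sup}(\alpha)<1/(4p-2)$, pick $\gamma\in(\ell_{\sup}(\alpha),1/(4p-2))$; then eventually $\alpha_{k+1}\le\gamma$, so $\log a_n\le(2p-1)\gamma\log n+O(1)$ and $1/a_k^2\ge c/k^{2(2p-1)\gamma}$ with exponent $2(2p-1)\gamma<1$, again yielding divergence.

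For the convergence assertion, suppose $p>3/4$ and $\ell_{\inf}(\alpha)>1/(4p-2)$. Pick $\gamma\in(1/(4p-2),\ell_{\inf}(\alpha))$, so that eventually $\alpha_{k+1}\ge\gamma$; the asymptotic above then gives $\log a_n\ge(2p-1)\gamma\log n-O(1)$, hence $1/a_k^2\le C/k^{2(2p-1)\gamma}$ with exponent $2(2p-1)\gamma>1$, and comparison with the convergent $p$-series delivers $B_\infty^2<\infty$. The only real obstacle is bookkeeping: controlling the Taylor remainder uniformly in $k$ and handling the edge case $p=3/4$ (where the divergence exponent is exactly $1$, but the harmonic series still diverges); both are routine.
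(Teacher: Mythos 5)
The paper does not prove this statement at all: it is imported verbatim as Lemma~10 of Coletti et al.\ \cite{Col}, ``without proof,'' so there is no in-paper argument to compare against. Your proposal is a correct, self-contained proof, and it follows the standard route that the cited reference also takes in substance: establish the two-sided polynomial growth $a_n\asymp n^{(2p-1)\gamma}$ (there via Gamma-function/product asymptotics for $\prod_k(1+\gamma(2p-1)/k)$, in your case via the log-linearization $\log a_n=(2p-1)\sum_k\alpha_{k+1}/k+O(1)$), and then compare $\sum 1/a_k^2$ with the $p$-series, the threshold exponent $2(2p-1)\gamma=1$ corresponding exactly to $\gamma=\tfrac{1}{4p-2}$. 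All the individual steps check out: the bound $|\log(1+x)-x|\le x^2$ on $[-1/2,1/2]$ is valid, the case split $p\le 1/2$, $1/2<p\le 3/4$, $p>3/4$ covers the divergence claim, and the choice of $\gamma$ strictly between the threshold and $\ell_{\inf}(\alpha)$ (resp.\ $\ell_{\sup}(\alpha)$) correctly converts the liminf/limsup hypotheses into eventual pointwise bounds. The only point worth a clause in a written version is the excluded $k=1$ factor: if $p=0$ and $\alpha_2=1$ then $a_n=0$ for $n\ge 2$, so $B_\infty^2=\infty$ holds trivially and this degenerate case lies inside the $p\le 3/4$ branch anyway; otherwise that single factor contributes a harmless additive constant to $\log a_n$.
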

If $p>3/4$ and $\ell_{\inf}(\alpha)>\frac{1}{4p-2}$, then $A_\infty<\infty$ by Lemma \ref{lem10}. We recall the following lemma without proof describing the non-degeneracy of the DERW differences.
\begin{Lem}[Lemma 12 of \cite{Col}]\label{lem12}Suppose one of the following three conditions:

\begin{enumerate}
\item $p<1$ and $\ell_{\inf}(\alpha)>0$.
\item  $p=1$ and $0<\ell_{\inf}(\alpha)\le\ell_{\sup}(\alpha)<1$, $0<\ell_{\inf}(\beta)\le\ell_{\sup}(\beta)<\frac{1-\ell_{\sup}(\alpha)}{1-\ell_{\inf}(\alpha)}$.
\item $0=\ell_{\inf}(\alpha)\le\ell_{\sup}(\alpha)<1$ and $0<\ell_{\inf}(\beta)\le\ell_{\sup}(\beta)<1-p\cdot\ell_{\sup}(\alpha)$.
\end{enumerate}
Then
\begin{align*}
\liminf_{n\to\infty}\mathop{\mathrm{Var}}[X_n]>0.
\end{align*}
\end{Lem}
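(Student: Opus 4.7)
Since $X_n\in\{-1,+1\}$ almost surely, we have $\mathop{\mathrm{Var}}[X_n]=1-\mu_n^2$ with $\mu_n:=E[X_n]$, so the lemma is equivalent to $\limsup_{n\to\infty}|\mu_n|<1$. My plan is to analyze the deterministic sequence $\mu_n$ via the expectation of \eqref{CE}. Writing $m_n:=E[S_n]$ and $c_n:=(1-\alpha_n)(2\beta_n-1)$, taking expectations gives
\begin{align*}
\mu_{n+1}=\frac{\alpha_{n+1}(2p-1)}{n}\,m_n+c_{n+1},\qquad \frac{m_{n+1}}{a_{n+1}}=\frac{m_n}{a_n}+\frac{c_{n+1}}{a_{n+1}}.
\end{align*}
Since $\alpha_{n+1}(2p-1)/n=(a_{n+1}-a_n)/a_n$, telescoping the second identity yields $m_n/a_n=m_1+\sum_{k=2}^n c_k/a_k$, and substituting back gives the useful closed form
\begin{align*}
\mu_{n+1}=(a_{n+1}-a_n)\Bigl(m_1+\sum_{k=2}^n \frac{c_k}{a_k}\Bigr)+c_{n+1}.
\end{align*}

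\textbf{Case (i).} For $p<1$ and $\ell_{\inf}(\alpha)>0$, I plan to use only the crude a priori bound $|m_n|\le n$. Combined with $a_{n+1}-a_n=a_n\alpha_{n+1}(2p-1)/n$ and $|c_{n+1}|\le 1-\alpha_{n+1}$, this yields
\begin{align*}
|\mu_{n+1}|\le \alpha_{n+1}(2p-1)+(1-\alpha_{n+1}) = 1-2\alpha_{n+1}(1-p),
\end{align*}
and $\liminf_n 2\alpha_n(1-p)\ge 2\ell_{\inf}(\alpha)(1-p)>0$ closes the case.

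\textbf{Cases (ii) and (iii), and the main obstacle.} When $p=1$ or $\ell_{\inf}(\alpha)=0$ is permitted, the crude bound only gives $|\mu_n|\le 1$ and must be sharpened by exploiting the two-sided control of $\beta_n$. Because $0<\ell_{\inf}(\beta)\le \ell_{\sup}(\beta)$ together with the case-specific upper bound on $\ell_{\sup}(\beta)$ confines $\beta_n$ to a compact subinterval of $(0,1)$, there is $\rho<1$ with $|2\beta_n-1|\le\rho$ eventually, so that
\begin{align*}
|\mu_{n+1}|\le \alpha_{n+1}(2p-1)+\rho(1-\alpha_{n+1})+o(1)=\rho+\alpha_{n+1}\bigl[(2p-1)-\rho\bigr]+o(1).
\end{align*}
A careful case analysis on the sign of $\beta_n-1/2$ (to pin down the optimal $\rho$), coupled with the specific bounds $\ell_{\sup}(\beta)<(1-\ell_{\sup}(\alpha))/(1-\ell_{\inf}(\alpha))$ in (ii) and $\ell_{\sup}(\beta)<1-p\ell_{\sup}(\alpha)$ in (iii), should then force the right-hand side strictly below $1$. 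The delicate part, and the principal obstacle, is that the stated hypotheses are sharp: the inequalities on $\ell_{\sup}(\beta)$ leave no slack, so one must track $\rho$ and the $\alpha$-weight jointly and, crucially, accommodate genuinely fluctuating sequences $\alpha_n,\beta_n$ rather than using a single limiting value; in particular the extremes $\ell_{\inf}(\alpha)$ and $\ell_{\sup}(\alpha)$ both enter the estimate simultaneously.
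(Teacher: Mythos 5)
First, note that the paper does not prove this statement at all: it is recalled verbatim from Coletti et al.\ \cite{Col} (Lemma 12 there) ``without proof,'' so there is no in-paper argument to compare against; your attempt has to stand on its own. Your reduction is sound ($X_n=\pm1$ gives $\Var[X_n]=1-E[X_n]^2$, so the claim is $\limsup_n|E[X_n]|<1$), and your case (i) argument via $|E[S_n]|\le n$ is essentially correct, but only for $p\ge 1/2$ as written: for $p<1/2$ the term must be $\alpha_{n+1}|2p-1|$, giving the bound $1-2\alpha_{n+1}\min(p,1-p)$, and at the corner $p=0$ (which condition (i) as stated allows) this crude one-step bound yields nothing, so that sub-case would need a separate argument. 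The genuine gap, however, is that cases (ii) and (iii) are not proved: you stop at a plan (``should then force the right-hand side strictly below $1$'') and explicitly flag an unresolved ``principal obstacle,'' so as submitted the proof is incomplete for two of the three hypotheses.

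Ironically, the obstacle you describe is illusory, and the display you already wrote finishes the job. In both (ii) and (iii) the hypotheses give $0<\ell_{\inf}(\beta)\le\ell_{\sup}(\beta)<1$ (since $\frac{1-\ell_{\sup}(\alpha)}{1-\ell_{\inf}(\alpha)}\le 1$ and $1-p\,\ell_{\sup}(\alpha)\le 1$), hence $|2\beta_n-1|\le\rho<1$ eventually, and they give $\ell_{\sup}(\alpha)<1$. With $|E[S_n]|\le n$ this yields, eventually,
\begin{align*}
|E[X_{n+1}]|\le |2p-1|\,\alpha_{n+1}+\rho\,(1-\alpha_{n+1})=\rho+\alpha_{n+1}\bigl(|2p-1|-\rho\bigr),
\end{align*}
whose $\limsup$ is at most $\max\bigl(\rho,\ \rho+(1-\rho)\,\ell_{\sup}(\alpha)\bigr)<1$, since $|2p-1|\le 1$, $\rho<1$ and $\ell_{\sup}(\alpha)<1$. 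No case analysis on the sign of $\beta_n-1/2$, no joint tracking of $\rho$ against the $\alpha$-weights, and no use of the precise upper bounds $\frac{1-\ell_{\sup}(\alpha)}{1-\ell_{\inf}(\alpha)}$ or $1-p\,\ell_{\sup}(\alpha)$ is needed: for this particular conclusion the stated hypotheses are far from sharp (they only enter through $\ell_{\sup}(\beta)<1$, $\ell_{\inf}(\beta)>0$, $\ell_{\sup}(\alpha)<1$), so your worry that ``the inequalities leave no slack'' is mistaken. Had you carried out this two-line estimate instead of stopping, the proof of (ii) and (iii) would be complete; as it stands, you have proved only case (i) (and there only for $p\in(0,1)$ after fixing the missing absolute value, with $p=0$ still open).
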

Set $M_0:=0$ and
\begin{align*}
M_n:=\frac{S_n-E[S_n]}{a_n}\quad \text{for}\ n\ge1,\\
Y_n:=M_{n}-M_{n-1}\quad \text{for} \ n\ge1,
\end{align*}
By Proposition 11 of \cite{Col}, we see that $\{M_n\}_{n=1}^\infty$ is a martingale. Since $Y_n=M_n-E[M_n|\mathcal{F}_{n-1}]\ \text{a.s.}$, we have
\begin{align*}
Y_n=\frac{X_n-E[X_n|\mathcal{F}_{n-1}]}{a_n}\quad \text{a.s.}
\end{align*}
for $n\ge1$. By the definition of the DERW, we have
\begin{align}\label{eq: Y2}
|Y_n|\le\frac{2}{a_n}\quad \text{a.s.}
\end{align}
for $n\ge1$.
Let $s_n^2:=\sum_{k=n}^\infty E[Y_k^2]$ for $n\ge1$. If $p>3/4$ and $\ell_{\inf}(\alpha)>\frac{1}{4p-2}$, then $s_n^2\underset{n\to\infty}{\to}0$ by \eqref{eq: Y2} and Lemma \ref{lem10}.
\begin{Lem} \label{lem1}
Suppose one of the following two conditions:
\begin{enumerate}
\item $3/4<p<1$ and $\ell_{\inf}(\alpha)>\frac{1}{4p-2}$.
\item  $p=1$ and $\frac{1}{4p-2}<\ell_{\inf}(\alpha)\le\ell_{\sup}(\alpha)<1$, $0<\ell_{\inf}(\beta)\le\ell_{\sup}(\beta)<\frac{1-\ell_{\sup}(\alpha)}{1-\ell_{\inf}(\alpha)}$.
\end{enumerate}
Then $s_{n+1}^2\sim A_\infty^2-A_n^2$.
\end{Lem}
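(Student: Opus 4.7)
The plan is to isolate the exact discrepancy between $s_{n+1}^2$ and $A_\infty^2 - A_n^2$ and show it is negligible. Since $X_n \in \{-1,+1\}$ gives $X_n^2 = 1$, the identity
\begin{align*}
a_n^2 E[Y_n^2] = 1 - E[(E[X_n|\mathcal{F}_{n-1}])^2] = (1-E[X_n]^2) - \Var(E[X_n|\mathcal{F}_{n-1}])
\end{align*}
will, upon summation over $k \geq n+1$, yield $s_{n+1}^2 = (A_\infty^2 - A_n^2) - R_n$ where $R_n := \sum_{k > n} \Var(E[X_k|\mathcal{F}_{k-1}])/a_k^2$. The problem therefore reduces to proving $R_n = o(A_\infty^2 - A_n^2)$.

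Next, I would bound $R_n$ from above. Since $E[X_k|\mathcal{F}_{k-1}]$ is a deterministic constant plus $\alpha_k(2p-1)S_{k-1}/(k-1)$, one has $\Var(E[X_k|\mathcal{F}_{k-1}]) \leq ((2p-1)/(k-1))^2 \Var(S_{k-1})$. By Lemma \ref{lem10} the hypotheses of Lemma \ref{lem1} ensure $B_\infty^2 < \infty$, so Theorem \ref{nondegThm} applies; in particular, $E[M_n^2] = \sum_{j \leq n} E[Y_j^2] \leq s_1^2 \leq 4 B_\infty^2 < \infty$, and hence $\Var(S_{k-1}) = a_{k-1}^2 E[M_{k-1}^2] \leq C a_k^2$ (using the obvious $a_{k-1} \leq a_k$). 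This yields $\Var(E[X_k|\mathcal{F}_{k-1}])/a_k^2 = O(1/k^2)$ and consequently $R_n = O(1/n)$.

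Finally, I would establish the lower bound $1/n = o(A_\infty^2 - A_n^2)$. Lemma \ref{lem12} applies in both cases to give $1 - E[X_k]^2 = \Var(X_k) \geq c_0 > 0$ for large $k$, so $A_\infty^2 - A_n^2 \geq c_0(B_\infty^2 - B_n^2)$. Using $\log(1+x) \leq x$, for any sufficiently small $\eta > 0$ and $n$ large,
\begin{align*}
\log a_n \leq (2p-1)\sum_{k=1}^{n-1} \alpha_{k+1}/k \leq \theta \log n + O(1), \quad \theta := (2p-1)\ell_{\sup}(\alpha) + \eta.
\end{align*}
In case (i), $\theta \leq (2p-1) + \eta < 1$; in case (ii), $\theta = \ell_{\sup}(\alpha) + \eta < 1$; and in both cases $\theta > 1/2$ because $(2p-1)\ell_{\inf}(\alpha) > (2p-1)/(4p-2) = 1/2$. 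Thus $1/a_k^2 \geq c' k^{-2\theta}$ and the tail estimate $\sum_{k>n} k^{-2\theta} \sim n^{1-2\theta}/(2\theta-1)$ gives $B_\infty^2 - B_n^2 \geq c'' n^{1-2\theta}$. Since $\theta < 1$, $n (A_\infty^2 - A_n^2) \to \infty$, forcing $R_n = O(1/n) = o(A_\infty^2 - A_n^2)$, whence $s_{n+1}^2 \sim A_\infty^2 - A_n^2$.

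The main obstacle is the lower bound on $A_\infty^2 - A_n^2$: the argument genuinely requires the polynomial upper bound $a_n \leq C n^\theta$ with $\theta < 1$, so the strict inequalities $p < 1$ in case (i) and $\ell_{\sup}(\alpha) < 1$ in case (ii) are essential. If either were weakened, $a_n$ could grow fast enough that $B_\infty^2 - B_n^2$ decays at rate $1/n$ or faster, at which point the correction term $R_n$ could compete with, rather than be absorbed by, $A_\infty^2 - A_n^2$.
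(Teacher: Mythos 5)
Your proposal is correct, and its first step is exactly the paper's: since $X_k^2=1$, one has $a_k^2E[Y_k^2]=1-E[X_k]^2-E[\xi_k^2]$ with $\xi_k:=E[X_k|\mathcal{F}_{k-1}^X]-E[X_k]$, so $s_{n+1}^2=(A_\infty^2-A_n^2)-R_n$ where your $R_n=\sum_{k>n}E[\xi_k^2]/a_k^2$ is precisely the paper's error term. Where you diverge is in showing $R_n=o(A_\infty^2-A_n^2)$. The paper argues softly: by Theorem 1 of \cite{Col}, $(S_n-E[S_n])/n\to0$ a.s., hence $\xi_k\to0$ a.s.\ and, being bounded by $2$, $E[\xi_k^2]\to0$; combined with the uniform bound $1-E[X_k]^2\ge c$ from Lemma \ref{lem12}, the two tails are compared termwise, $R_n\le\sup_{k>n}E[\xi_k^2]\,(B_\infty^2-B_n^2)\le c^{-1}\sup_{k>n}E[\xi_k^2]\,(A_\infty^2-A_n^2)$, and no rate for $a_n$ or for the tails is ever needed. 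You instead make everything quantitative: $E[\xi_k^2]\le((2p-1)/(k-1))^2\Var(S_{k-1})$ together with the $L^2$-boundedness of the martingale $M_n$ (valid since $B_\infty^2<\infty$ and $|Y_n|\le 2/a_n$) gives $R_n=O(1/n)$, and then the polynomial bound $a_n\le Cn^\theta$ with $1/2<\theta<1$ yields $A_\infty^2-A_n^2\ge c_0(B_\infty^2-B_n^2)\gtrsim n^{1-2\theta}\gg 1/n$. Your route is sound (the exponent bookkeeping checks out: $\theta<1$ uses $p<1$ in case (i) and $\ell_{\sup}(\alpha)<1$ in case (ii), while $\theta>1/2$ follows from $\ell_{\inf}(\alpha)>1/(4p-2)$), and it buys explicit rates of convergence in the asymptotic equivalence; the paper's argument buys brevity and robustness, since it invokes the strict inequalities only through Lemma \ref{lem12} and would survive situations where $a_n$ grows faster than any fixed power below $n$, which your final paragraph correctly identifies as the breaking point of your own estimate rather than of the lemma itself.
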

\begin{proof}
By Theorem 1 of \cite{Col}, we have
\begin{align*}
\frac{S_n-E[S_n]}{n}=o(1)\quad \text{a.s.}
\end{align*}
Thus, we have 
\begin{align*}
E[X_n|\mathcal{F}_{n-1}^X]-E[X_n]=(2p-1)\alpha_{n}\cdot\frac{S_{n-1}-E[S_{n-1}]}{n-1}=o(1)\quad \text{a.s.} 
\end{align*}
Put 
\begin{align*}
\xi_{n}:=E[X_n|\mathcal{F}_{n-1}^X]-E[X_n].
\end{align*} 
Then we have
\begin{align}
E[Y_n^2|\mathcal{F}_{n-1}^X]&=\frac{1}{a_n^2}E[1-2X_nE[X_n|\mathcal{F}_{n-1}^X]+E[X_n|\mathcal{F}_{n-1}^X]^2|\mathcal{F}_{n-1}^X]\notag\\
&=\frac{1}{a_n^2}(1-E[X_n|\mathcal{F}_{n-1}^X]^2)\notag\\
\label{eq: Y}
&=\frac{1}{a_n^2}(1-E[X_n]^2-2E[X_n]\xi_{n}-\xi_{n}^2).
\end{align}
Noting that $E[\xi_{n}]=0$, we have $E[Y_n^2]=\frac{1}{a_n^2}(1-E[X_n]^2-E[\xi_{n}^2])$. Then we have
\begin{align*}
\frac{s_{n+1}^2}{A_\infty^2-A_n^2}=1-\frac{1}{A_\infty^2-A_n^2}\sum_{k=n+1}^\infty\frac{1}{a_k^2}E[\xi_{k}^2].
\end{align*}
Note that $E[\xi_{n}^2]\to0$ as $n\to\infty$. In fact, since $\xi_{n}^2\le4$ for all $n\ge1$, we have $E[\xi_{n}^2]\to0$ as $n\to\infty$ by the Dominated Convergence Theorem. By Lemma \ref{lem12}, there exists $c\in(0,1)$ such that $1-E[X_n]^2\ge c$ for large $n$. Then
\begin{align*}
\frac{1}{A_\infty^2-A_n^2}\sum_{k=n+1}^\infty\frac{1}{a_k^2}E[\xi_{k}^2]&\le\frac{\sup_{k\ge n+1}E[\xi_{k}^2]}{A_\infty^2-A_n^2}(B_\infty^2-B_n^2)\\
&\le\frac{\sup_{k\ge n+1}E[\xi_{k}^2]}{c}\underset{n\to\infty}{\to}0. 
\end{align*}
The proof is complete.
\end{proof}
\begin{Lem} \label{lem2}
Suppose one of the following two conditions:
\begin{enumerate}
\item $3/4<p<1$ and $\ell_{\inf}(\alpha)>\frac{1}{4p-2}$.
\item  $p=1$ and $\frac{1}{4p-2}<\ell_{\inf}(\alpha)\le\ell_{\sup}(\alpha)<1$, $0<\ell_{\inf}(\beta)\le\ell_{\sup}(\beta)<\frac{1-\ell_{\sup}(\alpha)}{1-\ell_{\inf}(\alpha)}$.
\end{enumerate}
Then
\begin{align*}
\sum_{n=1}^\infty\frac{1}{s_n^4a_n^4}<\infty.
\end{align*}
\end{Lem}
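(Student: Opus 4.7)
The plan is to show $1/(a_n^2 s_n^2)=O(1/n)$, whence the claimed series is dominated by a constant multiple of $\sum_n 1/n^2$.

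First, I would invoke Lemma~\ref{lem1} to get $s_{n+1}^2\sim A_\infty^2-A_n^2$, and Lemma~\ref{lem12} (combined with $\Var[X_n]=1-E[X_n]^2$, since $X_n\in\{-1,+1\}$) to obtain a constant $c_0>0$ with $1-E[X_n]^2\ge c_0$ for all large $n$. Setting $T_n:=\sum_{k\ge n}1/a_k^2=B_\infty^2-B_{n-1}^2$, these two facts together yield $s_n^2\ge (c_0/2)T_n$ for large $n$, so the problem reduces to showing $\sum_n 1/(a_n^4 T_n^2)<\infty$.

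Next I would prove the polynomial bound $a_k\le C(k/n)^{\gamma_+}a_n$ for all $k\ge n\ge N_0$, where $\gamma_+:=(2p-1)(\ell_{\sup}(\alpha)+\varepsilon)$ and $\varepsilon>0$ is chosen small enough that $2\gamma_->1$ for $\gamma_-:=(2p-1)(\ell_{\inf}(\alpha)-\varepsilon)$; such $\varepsilon$ exists thanks to the hypothesis $\ell_{\inf}(\alpha)>\frac{1}{4p-2}$. Taking logarithms in the product formula for $a_k/a_n$, applying $\log(1+x)\le x$, and using $\alpha_{j+1}\le\ell_{\sup}(\alpha)+\varepsilon$ for $j\ge N_0$ gives this inequality. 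Consequently
\[
T_n\ge\frac{1}{C^2 a_n^2}\sum_{k\ge n}(n/k)^{2\gamma_+}\ge\frac{c_1 n}{a_n^2}
\]
for large $n$, where the last step compares the tail sum with $\int_n^\infty (n/x)^{2\gamma_+}\,dx$, which is of order $n$ since $2\gamma_+\ge 2\gamma_->1$. Inverting yields $1/(a_n^2 T_n)\le C_2/n$, hence $1/(a_n^4 T_n^2)\le C_2^2/n^2$, which is summable.

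The main obstacle I anticipate is the absence of any convergence assumption on $\{\alpha_n\}$: this rules out the cleaner asymptotic $a_n\asymp n^{\gamma}$ with a single exponent $\gamma$. The workaround above is to use only the upper bound on the ratio $a_k/a_n$ coming from $\ell_{\sup}(\alpha)$, which suffices to lower-bound $T_n$ once we know $2\gamma_+>1$, and this last inequality is automatic from $\gamma_+\ge\gamma_->1/2$. The other subtlety is the step $s_n^2\asymp T_n$; here the non-degeneracy statement of Lemma~\ref{lem12} is crucial, as it is what converts the weighted tail in $A_\infty^2-A_{n-1}^2$ into the unweighted tail $T_n$.
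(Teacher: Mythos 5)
Your proposal is correct and follows essentially the same route as the paper: reduce via Lemma \ref{lem1} and Lemma \ref{lem12} to a lower bound of order $n/a_n^2$ on the tail sum $\sum_{k\ge n}1/a_k^2$, and conclude by comparison with $\sum_n 1/n^2$. The only difference is in bounding the ratio $a_k/a_n$: the paper simply uses $(2p-1)\alpha_{l+1}\le 1$ to get $a_n^2/a_k^2\ge (n/k)^2$, whereas you derive the sharper polynomial bound with exponent $2\gamma_+$ from $\ell_{\sup}(\alpha)$ and then invoke $\ell_{\inf}(\alpha)>\frac{1}{4p-2}$ to ensure $2\gamma_+>1$ --- a correct but slightly more laborious variant of the same estimate.
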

\begin{proof}
Since $s_n^4\sim(A_\infty^2-A_{n-1}^2)^2$ holds by Lemma \ref{lem1}, it suffices to show $\sum_{n=1}^\infty1/(A_\infty^2-A_{n-1}^2)^2a_n^4<\infty$. By Lemma \ref{lem12}, there exists $c\in(0,1)$ such that $1-E[X_n]^2\ge c$ for large $n$. Thus we have for large $n$,
\begin{align*}
(A_\infty^2-A_{n-1}^2)^2a_n^4&=\left(\sum_{k=n}^\infty\frac{a_n^2}{a_k^2}(1-E[X_k]^2)\right)^2\\
&\ge c^2\left(1+\sum_{k=n+1}^\infty\frac{a_n^2}{a_k^2}\right)^2\\
&=c^2\left(1+\sum_{k=n+1}^\infty\prod_{l=n}^{k-1}\left(\frac{l}{l+(2p-1)\alpha_{l+1}}\right)^2\right)^2\\
&\ge c^2\left(1+\sum_{k=n+1}^\infty\prod_{l=n}^{k-1}\left(\frac{l}{l+1}\right)^2\right)^2\\
&=c^2\left(1+\sum_{k=n+1}^\infty\frac{n^2}{k^2}\right)^2\\
&\ge c^2\left(1+n^2\int_{n+1}^\infty\frac{1}{x^2}dx\right)^2\\
&=c^2\left(1+\frac{n^2}{n+1}\right)^2\\
&\ge c^2n^2.
\end{align*}
The proof is complete.
\end{proof}
\section{Proof of our main theorem}
We prove proceed to the proof of our theorem.
\begin{proof}[Proof of Theorem \ref{mThm}]
By Theorem \ref{nondegThm}, by Lemma \ref{lem12} and by Theorem 1 of \cite{Col}, the assumption of Theorem \ref{mThm} can be replaced by
\begin{align*}
B_\infty^2<\infty, \quad\liminf_{n\to\infty}\mathop{\mathrm{Var}}[X_n]>0, \quad p\cdot\ell_{\sup}(\alpha)<1.
\end{align*}
Let us prove Claim (i) (CLT). By Theorem 1 (b) of \cite{martconvthm1} and Corollary 1 of \cite{martconvthm1}, it suffices to check the following conditions:
\begin{enumerate}
\item[(a)]$\displaystyle
\frac{1}{s_n^2}\sum_{k=n}^\infty Y_k^2\to1$ as $n\to\infty$ in probability,
\item[(b)]$\displaystyle
\lim_{n\to\infty}\frac{1}{s_n^2}\sum_{k=n}^\infty E[Y_k^2;|Y_k|>\varepsilon s_n]=0$ for all $\varepsilon>0$.
\end{enumerate}

First we check Condition (a). Put
\begin{align*}
\eta_{n}:=-2E[X_n]\xi_{n}-\xi_{n}^2.
\end{align*} 
By \eqref{eq: Y}, we have $E[Y_n^2|\mathcal{F}_{n-1}^X]=\frac{1}{a_n^2}(1-E[X_n]^2+\eta_{n})$. Thus
\begin{align*}
\frac{\sum_{k=n}^\infty E[Y_k^2|\mathcal{F}_{k-1}^X]}{A_\infty^2-A_{n-1}^2}=1+\frac{1}{A_\infty^2-A_{n-1}^2}\sum_{k=n}^\infty\frac{\eta_{k}}{a_k^2}.
\end{align*}
By Lemma \ref{lem12}, there exists $c\in(0,1)$ such that $1-E[X_n]^2\ge c$ for large $n$. Noting that $\eta_{n}\to0$ as $n\to\infty$, we have
\begin{align*}
\left|\frac{1}{A_\infty^2-A_{n-1}^2}\sum_{k=n}^\infty\frac{\eta_{k}}{a_k^2}\right|\le\frac{\sup_{k\ge n}|\eta_{k}|}{c}\underset{n\to\infty}{\to}0.
\end{align*}
Hence $\frac{1}{s_n^2}\sum_{k=n}^\infty E[Y_k^2|\mathcal{F}_{k-1}]\to1$ as $n\to\infty$ by Lemma \ref{lem1}. We define 
\begin{align*}
L_n:=\sum_{k=1}^n\frac{1}{s_k^2}(Y_k^2-E[Y_k^2|\mathcal{F}_{k-1}^X])\quad n\ge1
\end{align*}
with $L_0:=0$, so that it is a martingale. In fact, $E[L_{n}-L_{n-1}|\mathcal{F}_{n-1}^X]=\frac{1}{s_{n}^2}(E[Y_{n}^2-E[Y_{n}^2|\mathcal{F}_{n-1}^X]|\mathcal{F}_{n-1}^X])=0$ for all $n\ge1$. By Lemma \ref{lem2} and by Eq. \eqref{eq: Y2}, we have
\begin{align*}
\sum_{k=1}^\infty \frac{1}{s_k^4}E[(Y_k^2-E[Y_k^2|\mathcal{F}_{k-1}^X])^2|\mathcal{F}_{k-1}^X]
&=\sum_{k=1}^\infty \frac{1}{s_k^4}(E[Y_k^4|\mathcal{F}_{k-1}^X]-E[Y_k^2|\mathcal{F}_{k-1}^X]^2)\\
&\le\sum_{k=1}^\infty\frac{1}{s_k^4}E[Y_k^4|\mathcal{F}_{k-1}^X]\\
&\le\sum_{k=1}^\infty\frac{16}{s_k^4a_k^4}<\infty.
\end{align*}
Thus $L_\infty<\infty,$\ a.s. by Theorem 2.15 of \cite{martconvthm2}, and so we obtain
\begin{align*}
\frac{1}{s_n^2}\sum_{k=n}^\infty (Y_k^2-E[Y_k^2|\mathcal{F}_{k-1}^X])\underset{n\to\infty}{\to}0
\end{align*}
by Lemma 1 (ii) of \cite{martconvthm1}. Then Condition (a) is satisfied.

Next we check Condition (b). For any $\varepsilon>0$, by Eq. \eqref{eq: Y2}, we have
\begin{align*}
\frac{1}{s_n^2}\sum_{k=n}^\infty E[Y_k^2;|Y_k|>\varepsilon s_n]\le\frac{1}{s_n^2}\sum_{k=n}^\infty E\left[Y_k^2\cdot\frac{Y_k^2}{\varepsilon^2s_n^2}\right]\le\frac{16}{\varepsilon^2s_n^4}\sum_{k=n}^\infty\frac{1}{a_k^4}\underset{n\to\infty}{\to}0
\end{align*}
by Lemma \ref{lem2} and Lemma 1 (ii) of \cite{martconvthm1}. The proof of Claim (i) (CLT) is now complete.

Let us prove Claim (ii) (LIL). By Theorem 1 (b) of \cite{martconvthm1}, it suffices to check the following conditions
\begin{enumerate}
\item[(a$'$)]$\displaystyle
\frac{1}{s_n^2}\sum_{k=n}^\infty Y_k^2\to1$ as $n\to\infty$\ a.s.,\\
\item[(c)]$\displaystyle
\sum_{k=1}^\infty\frac{1}{s_k}E[|Y_k|;|Y_k|>\varepsilon s_k]<\infty$ for all $\varepsilon>0$,\\
\item[(d)]$\displaystyle
\sum_{k=1}^\infty \frac{1}{s_k^4}E[Y_k^4;|Y_k|\le\delta s_k]<\infty$ for some $\delta>0$.
\end{enumerate}
By the proof of (i) of Theorem \ref{mThm}, Condition (a$'$) is satisfied. By Eq. \eqref{eq: Y2} and Lemma \ref{lem2}, we have
\begin{align*}
\sum_{k=1}^\infty\frac{1}{s_k}E[|Y_k|;|Y_k|>\varepsilon s_k]\le\sum_{k=1}^\infty \frac{1}{s_k}E\left[|Y_k|\cdot\frac{|Y_k|^3}{\varepsilon^3s_k^3}\right]\le\frac{16}{\varepsilon^3}\sum_{k=1}^\infty\frac{1}{s_k^4a_k^4}<\infty
\end{align*}
and
\begin{align*}
\sum_{k=1}^\infty \frac{1}{s_k^4}E[Y_k^4;|Y_k|\le s_k]\le\sum_{k=1}^\infty\frac{16}{s_k^4a_k^4}<\infty
\end{align*}
by Lemma \ref{lem2}. Then Conditions (c) and (d) are satisfied. The proof of Claim (ii) (LIL) is now complete.
\end{proof}\noindent
{\large\bf Acknowledgements}: The authors thank the anonymous referee for valuable comments to this paper, particularly for pointing out that Coletti et al. \cite{Col} reduces to our formulation. The authors would also like to thank Masato Takei and Naoki Kubota for their valuable comments. This research was supported by ISM. The research of K. Yano was supported by JSPS Open Partnership Joint Research Projects grant no. JPJSBP120209921 and by JSPS KAKENHI grant no.'s 19H01791, 19K21834 and 21H01002.

\end{document}